	\title[]{On a critical Kirchhoff-type problem}\newcommand{\N}{{\mathbb N}}
	\newcommand{\R}{{\mathbb R}}
    \def\d{{\,{\rm d}}x}
	\newcommand{\ds}{\displaystyle}
	\newtheorem{theor}{Theorem}[section]
	\newtheorem{rem}{Remark}[section]
\numberwithin{equation}{section}
\newenvironment{customthm}[1]
{\innercustomthm}
{\endinnercustomthm}
\author{Francesca Faraci }
\email{ffaraci@dmi.unict.it}
\address{Department of Mathematics and Computer Science, University of Catania,
	Catania, Italy}
\author{Csaba Farkas}
\email{farkas.csaba2008@gmail.com \& farkascs@ms.sapientia.ro}
\address{Department of Mathematics and Computer Science, Sapientia Hungarian University of Transylvania, Tg. Mure\c s, Romania}
\subjclass[2010]{35J20, 35J60}
\keywords{Kirchhoff type problem, critical nonlinearity, sequentially weakly lower semicontinuity, Palais Smale condition, exterior domains.}
\begin{document}
	\maketitle

	\begin{abstract} In the present paper we study  a Kirchhoff type  problem involving the critical Sobolev exponent. We give sufficient conditions for the sequentially weakly lower semicontinuity and the Palais Smale property of the energy functional associated to the problem.
	\end{abstract}
	
	
	
	\section{Introduction}
In the present paper we deal with the following Kirchhoff type problem involving a critical term	
$$\left\{
	\begin{array}{ll}
	- M\left(\displaystyle\int_{\Omega}|\nabla u|^p \d  \right)\Delta_p u=
	|u|^{p^*-2}u, & \hbox{ in } \Omega \\ \\
	u=0, & \hbox{on } \partial \Omega
	\end{array}
	\right. \eqno{(\mathcal{P})}
	$$
	where $\Omega$ is an open connected set of $\R^N$ with smooth boundary, $1<p<N$, $p^*=\frac{pN}{N-p}$ is the critical Sobolev exponent, $M:[0,+\infty[\to [0,+\infty[$ is a continuous function.

	The nonlocal operator  $\ds M\left(\int_{\Omega}|\nabla u|^p \d  \right)\Delta_p u$  generalizes the term $\ds \left(a+b\int_{\Omega}|\nabla u|^2\right)\Delta u$ of the Kirchhoff equation,
 proposed in 1883 as a model for describing the transversal oscillation of a strectched strings (\cite{Kirchhoff}).

Nonlocal problems received wide attention   after the pioneering work of  Lions (\cite{JLL}), where a functional analysis approach was implemented to study problems arising in the theory of evolutionary boundary value problems of mathematical physics.
After the work of Alves, Correa and Figueiredo (\cite{ACF}),
the existence and multiplicity of solutions of  Kirchhoff type problems  with critical nonlinearities in  bounded or unbounded domains (or even in the whole space) have been studied by a number of authors by employing different techniques as  variational methods, genus theory,  the Nehari manifold, the Ljusternik--Schnirelmann category theory (see for instance \cite{AFP, CF,Fan,F, FP} and the references therein). We mention also the recent works \cite{H2, N0}, where an application of the Lions' Concentration Compactness principle (\cite{L}) ensures the Palais Smale condition of the energy functional, a key property  for the application of the classical Mountain Pass Theorem.
	
	In the present paper, generalizing the recent work  \cite{FF}, we claim to show that the interaction between the Kirchhoff  and the critical term leads to some  variational properties of the energy functional as the sequentially weakly lower semicontinuity and the Palais Smale condition.
	
	 An application to a Kirchhoff type problem on exterior domains is given. More precisely, we combine our results with a recent minimax theory by Ricceri (\cite{R}) to prove the existence of two solutions for a suitable perturbation of $(\mathcal{P})$.

	\bigskip

	 Before stating our results, let us introduce some notations. Let $\widehat M:[0,+\infty[\to[0,+\infty[$ be the primitive of the function $M$, defined by
	\[\widehat M(t)=\int_0^t M(s)\, \mathrm{d}s.\]
	We endow the Sobolev and the Lebesgue spaces $W^{1,p}_0(\Omega)$ and $L^q(\Omega)\ (1\leq q\leq p^*)$ with the
	classical norms $$\|u\|=\left(\displaystyle \int_{\Omega }|\nabla u|^p \d \right)^{\frac{1}{p}}\ \ \mbox{ and }\ \ \|u\|_q=\left(\int_\Omega |u|^q \d\right)^{\frac{1}{q}}$$ respectively, and denote by $S_N$ the embedding constant of $W^{1,p}_0(\Omega)\hookrightarrow L^{p^*}(\Omega)$, i.e.
	\[\|u\|^p_{p^*}\leq S_N^{-1} \|u\|^p, \quad \mbox{for every } \ u\in W^{1,p}_0(\Omega). \]

	Let $\mathcal E:W^{1,p}_0(\Omega)\to\R$ be the energy functional associated to  the problem $(\mathcal{P})$,  defined by
	\[\mathcal E(u)=\frac{1}{p}\widehat M(\|u\|^p)-\frac{1}{p^*}\|u\|^{p^*}_{p^*}, \quad \mbox{for every }\  u \in W^{1,p}_0(\Omega),\] whose derivative at $u \in W^{1,p}_0(\Omega)$ is given by
	\[\mathcal E'(u)(v)=M(\|u\|^p)\int_\Omega |\nabla u|^{p-2}\nabla v\nabla u \d- \int_\Omega|u|^{p^*-2}uv \d, \quad \mbox{for every }\  v \in W^{1,p}_0(\Omega).\]
Define the constant 	\begin{equation}\label{theconstantcp}
c_p=\left\{
\begin{array}{ll}
(2^{p-1}-1)^\frac{p^*}{p}\frac{p}{p^*}{S}_N^{-\frac{p^*}{p}}, & \hbox{if }  \ p\geq 2 \\ \\
2^{2p^*-1-\frac{p^*}{p}}\frac{p}{p^*}{S}_N^{-\frac{p^*}{p}}, & \hbox{if } \ 1<p<2
\end{array}
\right.
\end{equation}

To prove  the lower semicontinuity property we require  the following   assumptions on $\widehat M$:
\begin{itemize}
	\item[$i)$] \label{subadditivity}$\widehat M(t+s)\geq \widehat M(t)+\widehat M(s)$ for every $t,s\in [0,+\infty[$;
	
	\item[$ii)$] \label{growthofM}$\ds \inf_{t> 0} \ \displaystyle\frac{\widehat M(t)}{t^{p^*\over p}}\geq c_p. $
\end{itemize}

 Our first result allows $\Omega$ to be an unbounded subset of $\R^N$, due to a general inequality valid for $p\geq 2$.
	
	\begin{theor}\label{semicontinuity1}
	Let $\Omega$ be an open connected set with smooth boundary, $p\geq 2$.
	
	If $i)$ and $ii)$ hold, then $\mathcal E$ is sequentially weakly lower semicontinuous on $W^{1,p}_0(\Omega)$.
	\end{theor}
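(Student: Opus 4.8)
The plan is to verify the defining inequality of sequential weak lower semicontinuity directly. Let $u_n \rightharpoonup u$ in $W^{1,p}_0(\Omega)$ and set $\ell=\liminf_n \mathcal E(u_n)$. Passing to a subsequence I may assume $\mathcal E(u_n)\to\ell$, and, exhausting $\Omega$ by bounded subdomains and invoking the Rellich theorem locally together with a diagonal extraction, I may also assume $u_n\to u$ a.e.\ in $\Omega$. Writing $v_n:=u_n-u\rightharpoonup 0$, the goal becomes $\ell\geq\mathcal E(u)$, which I reach by estimating the two ingredients of $\mathcal E$ separately.

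For the critical term, a.e.\ convergence of the \emph{functions} $u_n$ together with the boundedness of $\{u_n\}$ in $L^{p^*}(\Omega)$ lets me apply the Brezis--Lieb lemma (valid on the unbounded domain, since it needs no compactness), yielding $\|u_n\|^{p^*}_{p^*}=\|u\|^{p^*}_{p^*}+\|v_n\|^{p^*}_{p^*}+o(1)$. The Kirchhoff term is the delicate one: weak convergence in $W^{1,p}_0(\Omega)$ does \emph{not} force $\nabla u_n\to\nabla u$ a.e., so Brezis--Lieb is unavailable for $\|u_n\|^p=\int_\Omega|\nabla u_n|^p\d$. This is the main obstacle, and it is exactly where the hypothesis $p\geq 2$ enters: I replace the missing asymptotic splitting by the deterministic pointwise inequality, valid for $p\geq 2$ and all $\xi,\eta\in\R^N$,
\[|\xi|^p \geq |\eta|^p + p|\eta|^{p-2}\eta\cdot(\xi-\eta) + \frac{1}{2^{p-1}-1}|\xi-\eta|^p.\]
Taking $\xi=\nabla u_n$, $\eta=\nabla u$ and integrating gives
\[\|u_n\|^p \geq \|u\|^p + \frac{1}{2^{p-1}-1}\|v_n\|^p + p\int_\Omega |\nabla u|^{p-2}\nabla u\cdot\nabla v_n \d,\]
where the last integral tends to $0$ because $v_n\rightharpoonup 0$ and $|\nabla u|^{p-2}\nabla u\in L^{p'}(\Omega)$ defines a fixed bounded functional.

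To conclude I combine the two estimates. Since $M\geq 0$, the primitive $\widehat M$ is nondecreasing, and as $\{u_n\}$ is bounded the error term above is $o(1)$ on a compact interval on which $\widehat M$ is uniformly continuous; hence, using superadditivity $i)$,
\[\widehat M(\|u_n\|^p)\geq \widehat M\big(\|u\|^p\big)+\widehat M\!\left(\frac{\|v_n\|^p}{2^{p-1}-1}\right)+o(1).\]
Substituting both ingredients into $\mathcal E(u_n)$ isolates $\mathcal E(u)$ and leaves the remainder $\frac1p\widehat M\big(\tfrac{\|v_n\|^p}{2^{p-1}-1}\big)-\frac{1}{p^*}\|v_n\|^{p^*}_{p^*}+o(1)$. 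Applying $ii)$ to $\tau_n:=\tfrac{\|v_n\|^p}{2^{p-1}-1}$ bounds $\widehat M(\tau_n)$ below by $c_p\,\tau_n^{p^*/p}$, and the factor $(2^{p-1}-1)^{p^*/p}$ built into $c_p$ cancels the scaling exactly, producing $\frac1p\widehat M(\tau_n)\geq \frac{1}{p^*}S_N^{-p^*/p}\|v_n\|^{p^*}$; since the Sobolev inequality gives $\|v_n\|^{p^*}_{p^*}\leq S_N^{-p^*/p}\|v_n\|^{p^*}$, the remainder is $\geq o(1)$. Therefore $\mathcal E(u_n)\geq \mathcal E(u)+o(1)$, whence $\ell\geq\mathcal E(u)$, which is the desired semicontinuity. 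I expect the only genuinely nonroutine point to be the pointwise inequality substituting for Brezis--Lieb on the gradients; everything else is bookkeeping of the $o(1)$ terms and the precise algebra of $c_p$.
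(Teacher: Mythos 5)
Your proof is correct and follows essentially the same route as the paper: Br\'ezis--Lieb for the critical term, Lindqvist's pointwise inequality for $p\geq 2$ applied to the gradients, then superadditivity $i)$, the growth condition $ii)$ and the Sobolev inequality, with the constant $c_p$ giving the exact cancellation. The only difference is structural rather than substantive: the paper argues by contradiction along a subsequence with $\|u_{n_j}-u\|\to L>0$ (using this positivity to justify applying $i)$ to the argument of $\widehat M$), while you argue directly and absorb the vanishing cross term via uniform continuity of $\widehat M$ on compact intervals.
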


The same conclusion holds for $1<p<2$, but in our proof we need the boundedness of $\Omega$. It remains an open question if the property still holds for a general domain $\Omega$.
		\begin{theor}\label{semicontinuity2}
			
				Let $\Omega$ be a bounded open connected set with smooth boundary, $1<p<2$.
				
				If $i)$ and $ii)$ hold, then $\mathcal E$ is sequentially weakly lower semicontinuous on $W^{1,p}_0(\Omega)$.
		\end{theor}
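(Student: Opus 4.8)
The plan is to follow the proof of Theorem \ref{semicontinuity1}, isolating the one place where the restriction $p\geq 2$ was used and replacing it by an argument that exploits the compactness coming from the boundedness of $\Omega$. Fix a sequence $u_n\rightharpoonup u$ in $W^{1,p}_0(\Omega)$; passing to a subsequence I may assume that $\liminf_n\mathcal E(u_n)=\lim_n\mathcal E(u_n)$, that $\|u_n\|^p\to t_0$ and $\|u_n-u\|^p\to\gamma$ for some $t_0\geq\|u\|^p$ and $\gamma\geq 0$, and --- here the boundedness of $\Omega$ enters through the Rellich--Kondrachov theorem --- that $u_n\to u$ strongly in $L^q(\Omega)$ for every $q<p^*$ and a.e. in $\Omega$. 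Since $\sup_n\|u_n\|_{p^*}<\infty$, the Brezis--Lieb lemma applies to $u_n$ and gives $\|u_n\|_{p^*}^{p^*}=\|u\|_{p^*}^{p^*}+\|u_n-u\|_{p^*}^{p^*}+o(1)$, so that, writing $\beta=\lim_n\|u_n-u\|_{p^*}^{p^*}$ and using the continuity of $\widehat M$,
\[\lim_n\mathcal E(u_n)=\mathcal E(u)+\frac1p\bigl(\widehat M(t_0)-\widehat M(\|u\|^p)\bigr)-\frac1{p^*}\beta .\]
Thus the theorem reduces to the single inequality $\frac1p(\widehat M(t_0)-\widehat M(\|u\|^p))\geq\frac1{p^*}\beta$.

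To obtain it I would first establish a quantitative lower bound of the form
\[t_0\ \geq\ \|u\|^p+C_p\,\gamma ,\]
with the constant $C_p$ dictated by $c_p$ in \eqref{theconstantcp}. For $p\geq 2$ this is exactly the integrated form of the pointwise convexity inequality $|a+b|^p\geq|a|^p+p|a|^{p-2}a\cdot b+(2^{p-1}-1)^{-1}|b|^p$ underlying Theorem \ref{semicontinuity1}, the linear term disappearing because $u_n-u\rightharpoonup 0$. For $1<p<2$ this pointwise inequality is false --- its lower-order term degenerates like $|b|^2(|a|+|b|)^{p-2}$, which is not comparable to $|b|^p$ when $|b|\ll|a|$ --- and this is precisely the step that forces the boundedness hypothesis. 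I would instead invoke the quantitative uniform convexity of $L^p(\Omega;\R^N)$ in Clarkson form, in which the remainder can be controlled only after passing to the limit in a term of the type $\int_\Omega|\nabla u|^{p-2}\nabla u\cdot\nabla(u_n-u)\d$ (which vanishes since $u_n-u\rightharpoonup 0$) and absorbing a lower-order contribution by means of the strong $L^q$-convergence $u_n\to u$. This is the main obstacle of the proof, and it is also the reason why the argument does not extend to unbounded $\Omega$.

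Once the bound $t_0\geq\|u\|^p+C_p\gamma$ is available, the conclusion is purely algebraic. By the superadditivity $i)$ and the monotonicity of $\widehat M$ (which follows from $i)$ together with $\widehat M\geq 0$),
\[\widehat M(t_0)\ \geq\ \widehat M(\|u\|^p+C_p\gamma)\ \geq\ \widehat M(\|u\|^p)+\widehat M(C_p\gamma),\]
so that, by $ii)$ and the definition of $c_p$,
\[\widehat M(t_0)-\widehat M(\|u\|^p)\ \geq\ \widehat M(C_p\gamma)\ \geq\ c_p\,(C_p\gamma)^{\frac{p^*}{p}} .\]
Finally the Sobolev inequality gives $\beta=\lim_n\|u_n-u\|_{p^*}^{p^*}\leq S_N^{-\frac{p^*}{p}}\gamma^{\frac{p^*}{p}}$, and the value of $c_p$ in the range $1<p<2$ has been chosen exactly so that $\frac1p c_p(C_p\gamma)^{\frac{p^*}{p}}\geq\frac1{p^*}S_N^{-\frac{p^*}{p}}\gamma^{\frac{p^*}{p}}\geq\frac1{p^*}\beta$. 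Combining the last three displays yields $\frac1p(\widehat M(t_0)-\widehat M(\|u\|^p))\geq\frac1{p^*}\beta$, i.e. $\lim_n\mathcal E(u_n)\geq\mathcal E(u)$, which is the desired sequential weak lower semicontinuity.
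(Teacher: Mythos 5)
Your reduction of the theorem to the single inequality $\tfrac1p\bigl(\widehat M(t_0)-\widehat M(\|u\|^p)\bigr)\geq\tfrac1{p^*}\beta$ is sound and parallels the paper's contradiction setup, but the pivot of your argument --- the claim that weak convergence yields $t_0\geq\|u\|^p+C_p\,\gamma$ with a \emph{universal} constant $C_p>0$ --- is false for $1<p<2$, and no Clarkson-type uniform convexity inequality can deliver it. The obstruction is gradient oscillation. Take $u$ smooth with $\nabla u\neq 0$ on the support of some $\phi\in C_c^\infty(\Omega)$, and set $u_n=u+\frac{\varepsilon}{n}\sin(nx_1)\phi$. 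Then $u_n\rightharpoonup u$ in $W^{1,p}_0(\Omega)$, and a Taylor expansion combined with the Riemann--Lebesgue lemma gives
\begin{equation*}
t_0-\|u\|^p=\lim_n\|u_n\|^p-\|u\|^p=O(\varepsilon^2),\qquad \gamma=\lim_n\|u_n-u\|^p\sim\varepsilon^p ,
\end{equation*}
so $(t_0-\|u\|^p)/\gamma\to 0$ as $\varepsilon\to 0$ precisely because $p<2$: the degeneracy you yourself noted at the pointwise level (the remainder $|b|^2(|a|+|b|)^{p-2}$ is not bounded below by a multiple of $|b|^p$) survives integration and cannot be removed. Note also that strong $L^q$-convergence of $u_n$ is useless against this: in the example $u_n\to u$ uniformly, since the loss lives in the gradients, not in the functions. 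The example has $\beta=0$, so the theorem itself is not contradicted; but your proof routes everything through the false intermediate inequality, so it collapses exactly in the regime where oscillation, rather than concentration, absorbs the norm gap. The sentence ``I would instead invoke the quantitative uniform convexity of $L^p$ in Clarkson form \dots'' is therefore not a repairable sketch: it is the missing (and unprovable) core.

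The paper's actual proof separates concentration from oscillation by truncation, which is where the boundedness of $\Omega$ really enters. Writing $T_k(v)$ for the truncation at height $k$ and $R_k(v)=v-T_k(v)$, one has the \emph{exact} splitting $\|v\|^p=\|T_k(v)\|^p+\|R_k(v)\|^p$ (the gradients have disjoint supports), and since $\Omega$ is bounded the dominated convergence theorem gives $T_k(u_{n_j})\to T_k(u)$ strongly in $L^{p^*}(\Omega)$ for each fixed $k$; hence $\|u_{n_j}-u\|_{p^*}^{p^*}\leq 2^{p^*-1}S_N^{-p^*/p}\|R_k(u_{n_j})-R_k(u)\|^{p^*}+o_j(1)$, i.e.\ only the tails can feed the critical term. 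A double limit ($j\to\infty$, then $k\to\infty$) with a case analysis on whether $\lim_k\lim_j\|R_k(u_{n_j})-R_k(u)\|$ vanishes then closes the argument; the losses $2^{p-1}$ (from $\|R_k(u_{n_j})\|^p\geq 2^{1-p}\|R_k(u_{n_j})-R_k(u)\|^p-\|R_k(u)\|^p$) and $2^{p^*-1}$ (from splitting the critical norm) are exactly what the enlarged constant $c_p=2^{2p^*-1-\frac{p^*}{p}}\frac{p}{p^*}S_N^{-\frac{p^*}{p}}$ is designed to absorb --- not a Clarkson modulus, as your final paragraph guesses. If you want to salvage your scheme, you must prove your gap inequality only for the tail parts $R_k(u_{n_j})$, where norm additivity is exact and the constant $2^{1-p}$ suffices; that is, in substance, the paper's proof.
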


	In order to ensure  the Palais--Smale property we need the following condition on $M$: \begin{itemize}
		\item[$iii)$] \label{conditionforPS} $\ds \inf_{t> 0} \ \displaystyle \frac{ M(t)}{t^{\frac{p^*}{p}-1}}> {S}_N^{-\frac{p^*}{p}}. $
	\end{itemize}
		\begin{theor}\label{PS}
		Let $\Omega$ be an open connected set with smooth boundary, $p>1$.
		
		If $iii)$ holds, then
			 $\mathcal E$ satisfies the Palais--Smale property in $W^{1,p}_0(\Omega)$.
		\end{theor}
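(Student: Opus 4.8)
The plan is to show that any Palais--Smale sequence converges strongly, in fact to $0$, by exploiting the fact that condition $iii)$ forces the nonlocal Kirchhoff coefficient to dominate the critical term. Set $\mu=\inf_{t>0}\frac{M(t)}{t^{p^*/p-1}}$, so that by $iii)$ one has $\mu>S_N^{-p^*/p}$ and, pointwise, $M(t)\ge \mu\, t^{p^*/p-1}$ for every $t>0$. Let $(u_n)\subset W^{1,p}_0(\Omega)$ be a sequence with $\mathcal E(u_n)$ bounded and $\varepsilon_n:=\|\mathcal E'(u_n)\|_{(W^{1,p}_0(\Omega))^*}\to 0$.

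The key step is to test $\mathcal E'(u_n)$ against $u_n$ itself. From the expression of the derivative,
\[\mathcal E'(u_n)(u_n)=M(\|u_n\|^p)\,\|u_n\|^p-\|u_n\|_{p^*}^{p^*},\]
and since $|\mathcal E'(u_n)(u_n)|\le \varepsilon_n\|u_n\|$, we obtain
\[M(\|u_n\|^p)\,\|u_n\|^p\le \|u_n\|_{p^*}^{p^*}+\varepsilon_n\|u_n\|.\]
Now I would insert on the right-hand side the Sobolev inequality $\|u_n\|_{p^*}^{p^*}\le S_N^{-p^*/p}\|u_n\|^{p^*}$, and on the left-hand side the lower bound coming from $\mu$, namely $M(\|u_n\|^p)\|u_n\|^p\ge \mu\|u_n\|^{p^*}$.

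Combining these yields $(\mu-S_N^{-p^*/p})\|u_n\|^{p^*}\le \varepsilon_n\|u_n\|$. Since the gap $\mu-S_N^{-p^*/p}$ is strictly positive by $iii)$, dividing by $\|u_n\|$ (the case $u_n=0$ being trivial) gives
\[\|u_n\|^{p^*-1}\le \frac{\varepsilon_n}{\mu-S_N^{-p^*/p}}\longrightarrow 0,\]
and because $p^*-1>0$ this forces $\|u_n\|\to 0$. Hence $u_n\to 0$ strongly in $W^{1,p}_0(\Omega)$, and the Palais--Smale property follows; note that $0$ is, consistently, the unique critical point of $\mathcal E$ under $iii)$, and that the boundedness of the energy levels is not even needed in the argument.

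The noteworthy feature, and the place where one must be a little careful, is that no compactness of the embedding $W^{1,p}_0(\Omega)\hookrightarrow L^{p^*}(\Omega)$ is invoked. In a generic critical problem this lack of compactness is precisely the main obstacle, and one resorts to the concentration--compactness principle to rule out mass escaping to the critical level; here condition $iii)$ circumvents it entirely, since the strict domination of the Kirchhoff coefficient over the best Sobolev constant prevents any concentration and produces the uniform positive gap $\mu-S_N^{-p^*/p}$ that drives the estimate. The only point to verify with care is that this gap is genuinely positive and uniform in $n$, which is immediate once $iii)$ is read as an infimum over all $t>0$.
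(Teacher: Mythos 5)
Your proof is correct, and it takes a genuinely different --- and much more elementary --- route than the paper. The paper's argument is the classical one for critical problems: condition $iii)$ first yields coercivity of $\mathcal E$, hence boundedness of the Palais--Smale sequence; then Lions' second concentration--compactness lemma is invoked, a cut-off argument around each putative atom $x_{j_0}$ shows that the strict inequality in $iii)$ forces $\eta_{j_0}=0$, so no concentration occurs and $u_n\to u$ in $L^{p^*}(\Omega)$; finally, testing $\mathcal E'(u_n)$ against $u_n-u$ upgrades this to strong convergence in $W^{1,p}_0(\Omega)$. You instead test $\mathcal E'(u_n)$ against $u_n$ itself: since the lower bound $M(t)\geq \mu t^{p^*/p-1}$ with $\mu>S_N^{-p^*/p}$ beats the Sobolev inequality at every scale, you obtain the uniform gap $\left(\mu-S_N^{-p^*/p}\right)\|u_n\|^{p^*}\leq\varepsilon_n\|u_n\|$ and hence $\|u_n\|\to 0$; every step of this chain is sound (the identity $\mathcal E'(u_n)(u_n)=M(\|u_n\|^p)\|u_n\|^p-\|u_n\|_{p^*}^{p^*}$, the bound $|\mathcal E'(u_n)(u_n)|\leq\varepsilon_n\|u_n\|$, and the two pointwise inequalities are exactly as you state them). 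What the comparison buys: your argument reveals that under $iii)$ the statement is in fact degenerate --- the same computation applied to $\mathcal E'(u)(u)=0$ shows that $0$ is the \emph{only} critical point of $\mathcal E$, and every Palais--Smale sequence collapses to $0$ --- so no compactness machinery is required, and even the boundedness of $\mathcal E(u_n)$ is unused, as you observe. The paper's concentration--compactness proof, by contrast, is the one that survives perturbation: for the functional $\mathcal F_\mu$ of the application in Section 4, where a subcritical term $\frac{1}{q}\|u^+\|_q^q$ is added and nontrivial critical points do exist, your cancellation argument breaks down (the extra term cannot be absorbed into the gap), while the atom-killing analysis is precisely the part one would reuse there. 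So for the literal statement of Theorem \ref{PS} your proof is complete and considerably shorter; the paper's longer proof is the template that its own application actually needs.
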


		\section{Proof of Theorems \ref{semicontinuity1} and \ref{semicontinuity2}}

			Fix $u \in W^{1,p}_0(\Omega)$ and let $\{u_n\} \subset W^{1,p}_0(\Omega)$ such that $u_n\rightharpoonup u$ in $W^{1,p}_0(\Omega)$.
			Assume by contradiction that \begin{equation}\label{contr}l\equiv \liminf_{n\to\infty}\mathcal E(u_n)<\mathcal E(u).\end{equation} Then, there exists a subsequence $\{u_{n_j}\}$ such that
			$$\lim_{j\to\infty} \mathcal E(u_{n_j})=l.$$
						
			Since  $\mathcal E$ is strongly continuous, $\{u_{n_j}\}$ can not be strongly convergent to $u$ in $W^{1,p}_0(\Omega)$, that is $\ds \limsup_{j\to\infty}\|u_{n_j}-u\|=L>0$. Thus, there exists a a subsequence (still denoted by $\{u_{n_j}\}$), such that
		\[	\lim_{j\to\infty}\|u_{n_j}-u\|=L>0.\]
			The above limit implies also that
			\begin{equation}\label{limit1}
			\lim_{j\to\infty} \|u_{n_j}\|>\|u\|.
			\end{equation}
			We have,
			\begin{align*}
			\mathcal{E}(u_{n_j})-\mathcal{E}(u) =&\frac{1}{p}\left(\widehat M(\|u_{n_j}\|^p)-\widehat M(\|u\|^p)\right)-\frac{1}{p^*}\left(\|u_{n_j}\|_{p^*}^{p^*}-\|u\|_{p^*}^{p^*}\right).
			\end{align*}
			Let us recall that the Br\'ezis-Lieb lemma  implies
			
			\begin{equation}\label{BL}
			\|u_{n_j}\|_{p^*}^{p^*}-\|u\|_{p^*}^{p^*}=\|u_{n_j}-u\|_{p^*}^{p^*}+o(1).
			\end{equation}
			We distinguish now the two cases $p\geq 2$ and $1<p<2$.
			
			\medskip
			
			\noindent {\bf End of proof of Theorem \ref{semicontinuity1}: $p\geq 2$}.
			From \cite[Lemma 2]{Li}, the following inequality holds true:
			\begin{equation}\label{RN}
			|x|^p-|y|^p\geq \frac{1}{2^{p-1}-1}|x-y|^p +p\langle x-y, |y|^{p-2 }y\rangle  \qquad \mbox{for every} \ x,y\in\R^N.
			\end{equation}
			Thus,
			 \[\|u_{n_j}\|^p\geq \|u\|^p+ \frac{1}{2^{p-1}-1}\|u_{n_j}-u\|^p +p \int_{\Omega}|\nabla u|^{p-2}\nabla u\nabla (u_{n_j}-u)\d.\]
 Since $\{u_{n_j}\}$ weakly converges to  $u$,
		
	 \[\frac{1}{2^{p-1}-1}\|u_{n_j}-u\|^p+p\int_{\Omega}|\nabla u|^{p-2}\nabla u\nabla (u_{n_j}-u)\d\to \frac{1}{2^{p-1}-1}L^p>0,\]  and for $j$ large enough, one has $\ds \frac{1}{2^{p-1}-1}\|u_{n_j}-u\|^p+p\displaystyle\int_{\Omega}|\nabla u|^{p-2}\nabla u\nabla (u_{n_j}-u)\d > 0$.
Employing the monotonicity of  $\widehat M$ ensured by  $i)$, and \eqref{BL}, together with $i)$ and $ii)$ we obtain
			\begin{align*}
			\mathcal{E}(u_{n_j})-\mathcal{E}(u)=&\frac{1}{p}
			\widehat M\left(\|u\|^p+\frac{1}{2^{p-1}-1}\|u_{n_j}-u\|^p+p\displaystyle\int_{\Omega}|\nabla u|^{p-2}\nabla u\nabla (u_{n_j}-u)\d\right)-\frac{1}{p}\widehat M(\|u\|^p)\\&-\frac{1}{p^*}\|u_{n_j}-u\|_{p^*}^{p^*}+o(1) \\ \overset{i)}{\geq}&
			\frac{1}{p}\widehat M\left(\frac{1}{2^{p-1}-1}\|u_{n_j}-u\|^p+p\displaystyle\int_{\Omega}|\nabla u|^{p-2}\nabla u\nabla (u_{n_j}-u)\d\right) \\&-\frac{{S}_N^{-\frac{p^*}{p}}}{p^*}\|u_{n_j}-u\|^{p^*}+o(1)\\ \overset{ii)}{\geq} & \frac{c_p}{p} \left(\frac{1}{2^{p-1}-1}\|u_{n_j}-u\|^p+p\displaystyle\int_{\Omega}|\nabla u|^{p-2}\nabla u\nabla (u_{n_j}-u)\d\right)^{\frac{p^*}{p}}\\&-\frac{{S}_N^{-\frac{p^*}{p}}}{p^*}
			\|u_{n_j}-u\|^{p^*}+o(1).
			\end{align*}
		Passing to the limit in the above estimate, one has that
		$$l-\mathcal E(u)\geq \frac{{S}_N^{-\frac{p^*}{p}}}{p^*} L^{p^*}-\frac{{S}_N^{-\frac{p^*}{p}}}{p^*} L^{p^*}=0,$$ which contradicts \eqref{contr}.	
		
		\medskip
		
		\noindent {\bf End of proof of Theorem \ref{semicontinuity2}: $1<p<2$}. For $k \geq 1$, we consider the following two auxiliary functions $T_k,R_k:\mathbb{R}\to \mathbb{R}$ given by $$T_k(s)=\left\{
		\begin{array}{ll}
		-k,& \hbox{ if } s< -k \\
		s, & \hbox{ if } -k\leq s \leq k\\
		k, & \hbox{ if } s> k,
		\end{array}
		\right. $$
		and $R_k=\mathrm{Id}_{\mathbb{R}}-T_k$, i.e.,
		$$R_k(s)=\left\{
		\begin{array}{ll}
		s+k,& \hbox{ if } s> -k \\
		0, & \hbox{ if } -k\leq s \leq k\\
		s-k, & \hbox{ if } s> k,
		\end{array}
		\right. $$	
so, for every $v \in W_0^{1,p}(\Omega)$
	\begin{equation}
	\label{dec}
	\| v\|^p=\| T_k(v)\|^p+\|R_k(v)\|^p,
	\end{equation}
	and
	\[\lim_{k \to \infty}\|T_k(v)\|^p=\|v\|^p \qquad \mbox{and} \qquad \lim_{k \to \infty}\|R_k(v)\|=0. \] Also, for every $k\in\N$ (see \cite{MM})
	\begin{equation}\label{liminf}
	\liminf_{n \to \infty}\|T_k(u_n)\|^p\geq \|T_k(u)\|^p.
	\end{equation}
	From \eqref{dec} and the elementary inequality
	\[	\|R_k(u_n)\|^p\geq \frac{1}{2^{p-1}}\|R_k(u_n)- R_k(u)\|^p-\| R_k(u)\|^p,\]	 we obtain that
	\begin{align*}
	\|u_{n_j}\|^p-\|u\|^p=&\|T_k(u_{n_j})\|^p  + \|R_k(u_{n_j})\|^p -\|T_k(u)\|^p-\|R_k(u)\|^p\\
	\geq & \|T_k(u_{n_j})\|^p-\|T_k(u)\|^p +\frac{1}{2^{p-1}}\|R_k(u_n)- R_k(u)\|^p-2\| R_k(u)\|^p.
	\end{align*}


Moreover we point out that
\begin{align*}
\|u_{n_j}-u\|_{p^*}^{p^{*}}&=\int_{\Omega}|T_k(u_{n_j})-T_k(u)+R_k(u_{n_j})-R_k(u)|^{p^*}\d\nonumber \\ &\leq 2^{p^*-1}\int_{\Omega}|T_k(u_{n_j})-T_k(u)|^{p^{\ast}}\d+2^{p^*-1}\int_{\Omega}|R_k(u_{n_j})-R_k(u)|^{p^{*}}\d,
\end{align*}
and, since $\Omega$ is bounded, we can apply  Lebesgue dominated convergence  theorem to get, for every $k\in\N$,
 $$\lim_{j \to \infty} \int_{\Omega}|T_k(u_{n_j})-T_k(u)|^{p^{*}}\d= 0$$ and
\[\|u_{n_j}-u\|_{p^*}^{p^{*}} \leq 2^{p^*-1} {S}_N^{-\frac{p^*}{p}}\|R_k(u_{n_j})-R_k(u)\|^{p^*}+o_j(1),\]
where $o_j(1)\to 0$ as $j\to \infty$.

\noindent We consider two subcases:

a) $\ds \lim_{k\to \infty}\lim_{j\to \infty}\|R_k(u_{n_j})-R_k(u)\|=0$.
In such case, we have that
\begin{align*}
\lim_j\|u_{n_j}-u\|_{p^*}^{p^*}=& \lim_k\lim_j\|u_{n_j}-u\|_{p^*}^{p^*}\\
\leq & 2^{p^*-1} {S}_N^{-\frac{p^*}{p}}\lim_k\lim_j\|R_k(u_{n_j})-R_k(u)\|^{p^*}=0.
\end{align*}
Thus, together with \eqref{BL} and assumption $i)$ we get
\begin{align*}
	\mathcal{E}(u_{n_j})-\mathcal{E}(u)=&\frac{1}{p}
	\widehat M\left(\|u_{n_j}\|^p-\|u\|^p+\|u\|^p\right)-\frac{1}{p}\widehat M(\|u\|^p)-\frac{1}{p^*}\|u_{n_j}-u\|_{p^*}^{p^*}+o_j(1) \\ \geq	
	&\frac{1}{p}
	\widehat M\left(\|u_{n_j}\|^p-\|u\|^p\right)-\frac{1}{p^*}\|u_{n_j}-u\|_{p^*}^{p^*}+o_j(1),
\end{align*}
and passing to the limit,
\[l-\mathcal{E}(u)\geq 0,\] which contradicts \eqref{contr}.

b) $\ds \lim_{k\to \infty}\lim_{j\to \infty}\|R_k(u_{n_j})-R_k(u)\|>0$. Thus, there exist  $\alpha>0$ and $\bar k\in\N$ with the property: for every $k>\bar k$ there exists $j_k\in\N$ such that for $j>j_k$ one has
\[\|R_k(u_{n_j})-R_k(u)\|>\alpha.\]
Choosing eventually  bigger $\bar k$ and for $k>\bar k$, bigger $j_k$ one can assume that
\[\|T_k(u_{n_j})\|^p-\|T_k(u)\|^p +  \frac{1}{2^{p-1}}\|R_k(u_{n_j})- R_k(u)\|^p-2\| R_k(u)\|^p>0,\] so that
putting things together, and using
	\eqref{BL}, assumptions $i)$ and $ii)$,  for $k>\bar k$ and $j>j_k$,
	\begin{align}\label{esti}
	\mathcal{E}(u_{n_j})-\mathcal{E}(u)=&\frac{1}{p}
	\widehat M\left(\|u_{n_j}\|^p-\|u\|^p+\|u\|^p\right)-\frac{1}{p}\widehat M(\|u\|^p)-\frac{1}{p^*}\|u_{n_j}-u\|_{p^*}^{p^*}+o_j(1)\nonumber \\ \overset{i)}{\geq}	
	&\frac{1}{p}
	\widehat M\left(\|u_{n_j}\|^p-\|u\|^p\right)-\frac{1}{p^*}\|u_{n_j}-u\|_{p^*}^{p^*}+o_j(1) \nonumber \\ \geq&
	  \frac{1}{p} \widehat M\left(\|T_k(u_{n_j})\|^p-\|T_k(u)\|^p +  \frac{1}{2^{p-1}}\|R_k(u_{n_j})- R_k(u)\|^p-2\| R_k(u)\|^p \right)
	\nonumber \\& -\frac{2^{p^*-1}}{p^*}\|R_k(u_{n_j})-R_k(u)\|_{p^*}^{p^*}+o_j(1)
\nonumber 	\\
	\overset{ii)}{\geq}& \frac{c_p}{p}\left(\|T_k(u_{n_j})\|^p-\|T_k(u)\|^p +  \frac{1}{2^{p-1}}\|R_k(u_{n_j})- R_k(u)\|^p-2\| R_k(u)\|^p \right)^{\frac{p^*}{p}}
\nonumber 	\\&-{S}_N^{-\frac{p^*}{p}}\frac{2^{p^*-1}}{p^*}\|R_k(u_{n_j})-R_k(u)\|^{p^*}+o_j(1)\nonumber \\ \geq
		 & \frac{c_p \|R_k(u_{n_j})-R_k(u)\|^{p^*}}{p}	\left[\left(\frac{\|T_k(u_{n_j})\|^p-\|T_k(u)\|^p}{\|R_k(u_{n_j})-R_k(u)\|^p}+\frac{1}{2^{p-1}}-\frac{2\| R_k(u)\|^p}{\alpha^p} \right)^{\frac{p^*}{p}}\right]\nonumber \\&-{S}_N^{-\frac{p^*}{p}}\frac{2^{p^*-1}}{p^*}\|R_k(u_{n_j})-R_k(u)\|^{p^*} +o_j(1).
		\end{align}	
		Since for  fixed $k>\bar k$, by \eqref{liminf},
$$\liminf_{j\to\infty}	\frac{\|T_k(u_{n_j})\|^p-\|T_k(u)\|^p}{\|R_k(u_{n_j})-R_k(u)\|^p}\geq 0 \qquad \mbox{and} \qquad \lim_{j\to\infty}\frac{o_j(1)}{\|R_k(u_{n_j})-R_k(u)\|^{p^*}}=0,$$
we have
\begin{align*}
\liminf_{k\to\infty}\liminf_{j\to\infty}&\left[\left(\frac{\|T_k(u_{n_j})\|^p-\|T_k(u)\|^p}{\|R_k(u_{n_j})-R_k(u)\|^p}+\frac{1}{2^{p-1}}-\frac{2\| R_k(u)\|^p}{\alpha^p} \right)^{\frac{p^*}{p}}-{S}_N^{-\frac{p^*}{p}}\frac{2^{p^*-1}p}{p^*c_p}\right] \\ & \geq\liminf_{k\to\infty}\left(\frac{1}{2^{p-1}}-\frac{2\| R_k(u)\|^p}{\alpha^p}\right)^{\frac{p^*}{p}}-{S}_N^{-\frac{p^*}{p}}\frac{2^{p^*-1}p}{p^*c_p}\\&\overset{\eqref{theconstantcp}}{\geq} \left(\frac{1}{2^{p^*-\frac{p^*}{p}}}-{S}_N^{-\frac{p^*}{p}}\frac{2^{p^*-1}p}{p^*c_p}\right)=0
\end{align*}
Thus, for every $\varepsilon>0$ there exist   $\tilde k\in\N$  such that  for every $k>\tilde k$ there exists $j_k\in\N$ such that for $j>j_k$ one has
$$\left[\left(\frac{\|T_k(u_{n_j})\|^p-\|T_k(u)\|^p}{\|R_k(u_{n_j})-R_k(u)\|^p}+\frac{1}{2^{p-1}}-\frac{2\| R_k(u)\|^p}{\alpha^p} \right)^{\frac{p^*}{p}}-{S}_N^{-\frac{p^*}{p}}\frac{2^{p^*-1}p}{p^*c_p}\right]>-\varepsilon.$$
Moreover, by the boundedness of $\{u_{n_j}\}$ it follows that there exists a constant $M>0$ such that  $\|R_k(u_{n_j})-R_k(u)\|^{p^*}\leq M$, so that we obtain

$$\frac{c_p \|R_k(u_{n_j})-R_k(u)\|^{p^*}}{p}\left[\left(\frac{\|T_k(u_{n_j})\|^p-\|T_k(u)\|^p}{\|R_k(u_{n_j})-R_k(u)\|^p}+\frac{1}{2^{p-1}}-\frac{2\| R_k(u)\|^p}{\alpha^p} \right)^{\frac{p^*}{p}}-{S}_N^{-\frac{p^*}{p}}\frac{2^{p^*-1}p}{p^*c_p}\right]>-\varepsilon M\frac{c_p}{p}.$$
This means that passing to the $\liminf$ in the right hand side of \eqref{esti},
we obtain
$$
l-\mathcal E(u)=\liminf_{k\to\infty}\lim_{j\to\infty} (\mathcal{E}(u_{n_j})-\mathcal{E}(u))\geq0,
$$	which contradicts \eqref{contr}.

The proof is complete.
\qed

	\begin{rem}\label{with constant}
	{\rm	From the proof of the above theorems, it immediately follows that   for $\mu\geq 1$, the functional $\mathcal E_\mu:W^{1,p}_0(\Omega)\to \R$ defined by
		\[\mathcal E_\mu(u)=\frac{{\mu}}{p}\widehat M(\|u\|^p)-\frac{1}{p^*}\|u\|^{p^*}_{p^*}  \quad \mbox{for every }\  u \in W^{1,p}_0(\Omega),\] is sequentially weakly lower semicontinuous in  $W^{1,p}_0(\Omega)$.}
		\end{rem}
			\begin{rem}
				{\rm A comparison with \cite[Lemma 2.1]{FF} is in order. \\					
				The above result extends to more general Kirchhoff operator the result of \cite[Lemma 2.1]{FF} where the simple case $M(t)= a+bt $ when $p=2$ is considered. The sequential weak lower semicontinuity of $\mathcal E$ in such case is proved when (see also the proof of \cite[Lemma 2.1]{FF})
 \begin{itemize}
 \item $N=4$, $a\geq 0, b>0$ with $b\geq C_1(N)$,
 \item $N>4$, $a> 0, b>0$ with $ a^{\frac{N-4}{2}} b\geq C_1(N)$, 
 \end{itemize}
 where \[
					C_1(N)=
					\begin{cases}\ds
					\frac{4(N-4)^{\frac{N-4}{2}}}{N^{\frac{N-2}{2}}S_{N}^{\frac{N}{2}}} & N>4\\
					\ds S_{4}^{-2} & N=4,
					\end{cases}\]
Notice that  when $p=2$, assumption $ii)$ in Theorem \ref{semicontinuity1} reads as
					\[\inf_{t> 0} \ \displaystyle\frac{\widehat M(t)}{t^{2^*\over 2}}\geq \frac{2}{2^*}{S}_N^{-\frac{2^*}{2}}, \]
					and it is fulfilled by the particular case investigated in \cite{FF}. 
					\\
					 Notice also that, with respect to \cite{FF}, our result holds also  for $N=3$.}
			\end{rem}
		\begin{rem} {\rm When $p\geq 2$ we have a "better" estimate of the constant $c_p$. This reason is due to the fact that inequality \eqref{RN} is no longer valid when $1<p<2$ and we employ more rough estimates which carry along some extra constants. }
	\end{rem}
		\begin{rem}
			{\rm When $p=2,$  assumption $ii)$ is equivalent to the following sign property for $\mathcal E$:
			\[\mathcal E(u)\geq 0 \quad \mbox{for every } \ u\in W^{1,2}_0(\Omega).\]
			If $ii)$ does not hold, there exists  $\bar t>0$ such that
$\ds \frac{1}{2}\widehat M(\bar t\,)<\frac{{S}_N^{-\frac{2^*}{2}}}{2^*}{\bar t^{2^*\over 2}}$.
Let $\{u_n\}$ be a minimizing sequence for $S_N$. Then, it is weakly convergent and it has a subsequence $\{u_{n_k}\}$ strongly converging. Let $c>0$ such that $\ds c\lim_{k\to \infty}\|u_{n_k}\|=\sqrt{\bar t}$. Then,
\begin{eqnarray*}
\lim_{k\to\infty}\mathcal E(cu_{n_k})&=&\lim_{k\to\infty}\left[\frac{1}{2}\widehat M(\|c u_{n_k}\|^2)-\frac{1}{2^*}\|c u_{n_k}\|_{2^*}^{2^*}\right]\\&=&
\lim_{k\to\infty}\left[\frac{1}{2}\widehat M(\|c u_{n_k}\|^2)-\frac{{S}_N^{-\frac{2^*}{2}}}{2^*}\|c u_{n_k}\|^{2^*}\right]\\
&=&\frac{1}{2}\widehat M(\bar t)-\frac{{S}_N^{-\frac{2^*}{2}}}{2^*}{\bar t^{2^*\over 2}}<0.
\end{eqnarray*}
The reverse implication follows from the Sobolev embedding. It is clear that we can not expect such equivalence  for $p>2$ since inequality \eqref{RN} is far from being optimal.
}
			
			\end{rem}
			
			\section{Proof of Theorem \ref{PS}}

	 		Let $\{u_{n}\}$ be a  Palais Smale sequence for $\mathcal E$, that is
	 		\[
	 		\begin{cases}
	 		\mathcal{E}(u_{n})\to c\\
	 		\mathcal{E}'(u_{n})\to0
	 		\end{cases}\mbox{as }n\to\infty.
	 		\] We claim that $\{u_{n}\}$ admits a  strongly convergent subsequence in $W^{1,p}_0(\Omega)$.

	 		Let us first notice that $\mathcal E$ is coercive. Indeed, let $k$ be a positive constant such that  $k>{S}_N^{-\frac{p^*}{p}}$ and $M(t)\geq k t^{\frac{p^*}{p}-1}$ for every $t\geq 0$. Then,
	 		$\widehat M(t)\geq \frac{p}{p^*} k t^{\frac{p^*}{p}}$ for every $t\geq 0$ and  $$\mathcal E(u)\geq \frac{1}{p^*}\left(k-{S}_N^{-\frac{p^\star}{p}}\right)\|u\|^{p^*}, \  \mbox{for every}  \ u\in W^{1,p}_0(\Omega),$$ and coercivity of $\mathcal E$ follows at once.
	 		
	 		Then, the (PS) sequence   $\{u_{n}\}$ is bounded and there exists $u\in W^{1,p}_0(\Omega)$ such that (up to a subsequence)
	 		
	 		\begin{align*}
	 		u_{n} & \rightharpoonup u\mbox{ in }W^{1,p}_0(\Omega),\\
	 		u_{n} & \to u\mbox{ in }L^{q}_{\rm loc}(\Omega),\ q\in[1,p^{*}),\\
	 		u_{n} & \to u\mbox{ a.e. in }\Omega.
	 		\end{align*}
	 		Using the second Concentration Compactness lemma of Lions \cite{L}, there exist an at most countable index set $J$,
	 		a set of points $\{x_{j}\}_{j\in J}\subset\overline\Omega$ and two families of positive
	 		numbers $\{\eta_{j}\}_{j\in J}$, $\{\nu_{j}\}_{j\in J}$ such that
	 		\begin{align*}
	 		|\nabla u_{n}|^{p} & \rightharpoonup d\eta\geq|\nabla u|^{p}+\sum_{j\in J}\eta_{j}\delta_{x_{j}},\\
	 		|u_{n}|^{p^*} & \rightharpoonup d\nu=|u|^{p^*}+\sum_{j\in J}\nu_{j}\delta_{x_{j}},
	 		\end{align*}
	 		(weak star convergence in the sense of measures), where $\delta_{x_{j}}$ is the Dirac mass concentrated at
	 		$x_{j}$ and such that
	 		$$		S_{N}  \nu_{j}^{\frac{p}{p^*}}\leq\eta_{j} \qquad \mbox{for every $j\in J$}.$$
	 		Next, we will prove that the index set $J$ is empty. Arguing
	 		by contradiction, we may assume that there exists a $j_{0}$ such
	 		that $\nu_{j_{0}}\neq0$. Consider now, for $\varepsilon>0$ a non negative  cut-off function $\phi_\varepsilon$ such that
	 		\begin{align*}
	 		&\phi_{\varepsilon}  =1\mbox{ on }B(x_{0},\varepsilon),\\
	 		&\phi_{\varepsilon}  =0\mbox{ on } \Omega\setminus B(x_{0},2\varepsilon),\\
	 		&|\nabla\phi_{\varepsilon}|  \leq\frac{2}{\varepsilon}.
	 		\end{align*}
	 		It is clear that the sequence $\{u_{n}\phi_{\varepsilon}\}_{n}$ is
	 		bounded in $W^{1,p}_0(\Omega)$,  so that
	 		\[
	 		\lim_{n\to\infty}\mathcal{E}'(u_{n})(u_{n}\phi_{\varepsilon})=0.
	 		\]
	 		Thus
	 		\begin{align}\label{calc 1}
	 		o(1) & =M(\|u_n\|^p)\int_{\Omega}|\nabla u_{n}|^{p-2}\nabla u_n\nabla(u_{n}\phi_{\varepsilon})\d-\int_{\Omega}|u_{n}|^{p^*}
	 		\phi_{\varepsilon}\d\nonumber \\
	 		& =M(\|u_n\|^p)\left(\int_{\Omega}|\nabla u_{n}|^{p}\phi_{\varepsilon}\d+\int_{\Omega}u_{n}|\nabla u_n|^{p-2}\nabla u_{n}\nabla\phi_{\varepsilon}\d\right)-\int_{\Omega}|u_{n}|^{p^*}\phi_{\varepsilon}\d.
	 		\end{align}
	 		Using  H\"{o}lder inequality one has
	 		
	 		\begin{eqnarray*}
	 			\left|\int_\Omega u_{n}|\nabla u_n|^{p-2}\nabla u_{n}\nabla\phi_{\varepsilon}\d\right|&=&\left|\int_{B(x_{0},2\varepsilon)}u_{n}|\nabla u_n|^{p-2}\nabla u_{n}\nabla\phi_{\varepsilon}\right|\d\\ &\leq& \left(\int_{B(x_{0},2\varepsilon)}|\nabla u_n|^p\d\right)^\frac{1}{p}
	 			\left(\int_{B(x_{0},2\varepsilon)}|u_n\nabla \phi_\varepsilon|^{p'}\d\right)^\frac{1}{p'}\\
	 			&\leq&  C \left(\int_{B(x_{0},2\varepsilon)}|u_n\nabla \phi_\varepsilon|^{p'}\d\right)^\frac{1}{p'},
	 		\end{eqnarray*}
	 		where $p'$ is the conjugate of $p$.
	 		Since $$\lim_{n\to\infty}\int_{B(x_{0},2\varepsilon)}|u_n\nabla \phi_\varepsilon|^{p'}\d=\int_{B(x_{0},2\varepsilon)}|u\nabla \phi_\varepsilon|^{p'}\d,$$ and
	 		\begin{eqnarray*}
	 			\left(\int_{B(x_{0},2\varepsilon)}|u\nabla \phi_\varepsilon|^{p'}\d\right)^\frac{1}{p'}&\leq &
	 			\left(\int_{B(x_{0},2\varepsilon)} |u|^{p^*}\d\right)^\frac{1}{p^*}
	 			\left(\int_{B(x_{0},2\varepsilon)}|\nabla \phi_\varepsilon|^{\frac{pN}{p(N+1)-2N}}\d \right)^\frac{p(N+1)-2N}{pN}\\
	 			&\leq& C \left(\int_{B(x_{0},2\varepsilon)} |u|^{p^*}\d\right)^\frac{1}{p^*} \longrightarrow 0,\  \hbox{as} \ \varepsilon\to 0
	 		\end{eqnarray*}
	 	by the boundedness of the sequence $\{M(\|u_n\|^p)\}_n$	we get
	 		
	 		\[
	 		\lim_{\varepsilon\to0}\lim_{n\to\infty}M(\|u_{n}\|^{p})\left|\int_\Omega u_{n}|\nabla u_n|^{p-2}\nabla u_{n}\nabla\phi_{\varepsilon}\d\right|=0.
	 		\]	
	 		
	 		Moreover, as $0\leq \phi_\varepsilon\leq 1$,
	 		\begin{eqnarray*}
	 			\lim_{n\to\infty}M(\|u_{n}\|^{p})\int_{\Omega}|\nabla u_{n}|^{p}\phi_{\varepsilon}\d&\geq&
	 		k\lim_{n\to\infty}\left(\int_{B(x_{0},2\varepsilon)}|\nabla u_{n}|^{p}\phi_{\varepsilon}\d\right)^{\frac{p^*}{p}}\\&\geq&
	 			k\left(\int_{B(x_{0},2\varepsilon)}|\nabla u|^{p}\phi_{\varepsilon}\d+ \eta_{j_0}\right)^{\frac{p^*}{p}}.
	 		\end{eqnarray*}
	 		Also, $\ds \int_{B(x_{0},2\varepsilon)}|\nabla u|^{p}\phi_{\varepsilon}\d\to 0$ as $\varepsilon\to 0$, so
	 		\[\lim_{\varepsilon\to 0}\lim_{n\to\infty}M(\|u_{n}\|^{p})
	 		\int_{\Omega}|\nabla u_{n}|^{p}\phi_{\varepsilon}\d \geq k\eta_{j_0}^{\frac{p^*}{p}}.\]
	 		
	 		Finally,
	 		\begin{align*}
	 		\lim_{\varepsilon\to0}\lim_{n\to\infty}\int_\Omega|u_{n}|^{p^*}\phi_{\varepsilon}\d & =\lim_{\varepsilon\to0}\int_{B(x_{0},2\varepsilon)} |u|^{p^*}\phi_{\varepsilon}\d+\nu_{j_{0}}=\nu_{j_{0}}.
	 		\end{align*}
	 		Summing up the above outcomes, from \eqref{calc 1}
	 		one obtains
	 		\[
	 		0 \geq k\eta_{j_0}^{\frac{p^*}{p}}-\nu_{j_0} =\left(k-S_{N}^{-\frac{p^*}{p}}\right)\eta_{j_{0}}^{\frac{p^*}{p}}\geq 0.
	 		\]
	 	
	 			 		Therefore $\eta_{j_{0}}=0,$ which is a contradiction. Such conclusion  implies
	 		that $J$ is empty, that is
	 		\[\lim_{n\to\infty}\int_{\Omega}|u_n|^{p^*}\d= \int_{\Omega}|u|^{p^*}\d\]
	 		and the uniform convexity of $L^{p^*}(\Omega)$ implies that 		\[
	 		u_{n}\to u\mbox{ in }L^{p^*}(\Omega).
	 		\]
	 		Since $\{u_n-u\}$ is bounded in $W^{1,p}_{0}(\Omega)$,
	 		\[
	 		\lim_{n\to\infty}\mathcal{E}'(u_{n})(u_{n}-u)=\lim_{n\to\infty}\left[M(\|u_n\|^p)\int_{\Omega}|\nabla u_n|^{p-2}\nabla u_n \nabla (u_n-u)\d-\int_\Omega |u_n|^{p^*-2}u_n(u_n-u)\d\right]=0.
	 		\]
	 	From H\"{o}lder inequality, 	
	 		\[\left|\int_\Omega |u_n|^{p^*-2}u_n(u_n-u)\d\right|\leq
	 		\left(\int_\Omega|u_n|^{p^*}\d\right)^{\frac{pN-N+p}{pN}}\left(\int_\Omega|u_n-u|^{p^*}\d\right)^\frac{1}{p^*},
	 		\]
	 		so we deduce that
	 		 \[
	 		\lim_{n\to\infty}M(\|u_n\|^p)\left|\int_{\Omega}|\nabla u_n|^{p-2}\nabla u_n \nabla (u_n-u)\d\right|=0.
	 		\]
	 		We claim that
	 	\begin{equation}\label{strong}	\lim_{n\to\infty}\int_{\Omega}|\nabla u_n|^{p-2}\nabla u_n \nabla (u_n-u)\d=0.
	 	\end{equation}
	 		If $\ds \limsup_{n\to\infty}M(\|u_n\|^p)>0$, then, \eqref{strong} follows at once. If $\ds \lim_{n\to\infty}M(\|u_n\|^p)=0$, then, by $iii)$, we obtain that $u_n\to 0$ strongly in $W^{1,p}_{0}(\Omega)$ and \eqref{strong} holds true also in this case.
	 		
	 		Putting together  \eqref{strong} with the limit \[\lim_{n\to\infty}\int_{\Omega}|\nabla u|^{p-2}\nabla u \nabla (u_n-u)\d=0,\] we obtain
	 		\[\lim_{n\to\infty}\int_{\Omega}\left(|\nabla u_n|^{p-2}\nabla u_n-|\nabla u|^{p-2}\nabla u\right) \nabla (u_n-u)\d=0,\]
	 		which implies at once  that $u_n\to u$ strongly in $W^{1,p}_{0}(\Omega)$.
     \qed
	 		
	 			\begin{rem}\rm
	 				The above result extends the result of \cite[Lemma 2.2]{FF} where the simple case $M(t)= a+bt $ when $p=2$ is considered. The Palais Smale property for $\mathcal E$ in such case is proved when (see also the proof of \cite[Lemma 2.2]{FF})
 \begin{itemize}
 \item $N=4$, $a\geq 0, b>0$ with $b> C_2(N)$,
 \item $N>4$, $a> 0, b>0$ with $ a^{\frac{N-4}{2}} b>C_2(N)$,
 \end{itemize}
 where \[C_2(N)=\begin{cases}
	 				\ds\frac{2(N-4)^{\frac{N-4}{2}}}{(N-2)^{\frac{N-2}{2}}S_{N}^{\frac{N}{2}}} & N>4\\
	 				\ds {S_{4}^{-2}} & N=4.
	 				\end{cases}.
	 				\]
Notice that  when $p=2$, assumption $iii)$ reads as
	 				\[\inf_{t> 0} \ \displaystyle \frac{ M(t)}{t^{\frac{2^*}{2}-1}}> {S}_N^{-\frac{2^*}{2}}. \]
	 				and it is fulfilled by the particular case investigated in \cite{FF}.  It is worth mentioning  that Palais--Smale property for $\mathcal E$ when $M(t)=a+bt$ and $p=2$, was first proved by Hebey on compact Riemannian manifolds (\cite{H2}).	 		\end{rem}

\section{An application}

In this section, we provide an application of Theorem \ref{semicontinuity1} to the following Kirchhoff problem on an exterior domain:
$$\left\{
\begin{array}{ll}
- M\left(\displaystyle\int_{\Omega}|\nabla u|^p \d \right)\Delta_p u=\lambda(
u^{p^*-1}+u^{q-1}), & \hbox{ in } \Omega \\
u\geq 0 & \hbox{in } \Omega \\
u=0, & \hbox{on } \partial \Omega
\end{array}
\right.\eqno{(\mathcal{P}_\lambda)}
$$
where $\Omega=\R^N\setminus B(0,R)$ for some positive $R$, $2\leq p<q<p^*$, and $\lambda>0$.

As far a we know there are no contributions  about subcritical and critical Kirchhoff equations on exterior
unbounded domains beside \cite{FM} where an existence result is proved via a careful analysis of Palais Smale sequences and an application of the mountain pass theorem.

We will prove a multiplicity result for problem $(\mathcal{P}_\lambda)$ by employing an abstract   well--posedness result for a class of constrained minimization problem which is derived by a general minimax theory of Ricceri (\cite{R}). Let us first recall the following definition:
if $X$ is a topological space, $C\subset X$, $f:X\to\R$, we say that the problem of minimizing $f$ over $C$ is well-posed if the following two conditions hold:
\begin{itemize}
	\item[-] the restriction of $f$ to $C$ has a unique global minimum, say $x_0$;
	\item[-] every sequence $\{x_n\}$ in $C$ such that $\ds \lim_{n\to \infty} f (x_n) = \min_C f$, converges to $x_0$.
\end{itemize}

\begin{customthm}{\textbf{A} (\cite[Theorem 1]{R})}\label{RicceriJogo}
	Let $X$ be a Hausdorff topological space,  $a>0$, $\Phi,\Psi:X\to\R$, two functions such that, for each $\mu> a$, $\mu\Psi+\Phi$ has sequentially compact sub-level sets and admits a unique global minimum in $X$. Denote by $\mathscr{M}_a$ the set of global minima of $a \Psi+\Phi$ and assume that
 $$\inf_{X}\Psi <\inf_{\mathscr{M}_a} \Psi$$ (if $\ds \mathscr{M}_a=\emptyset$, put $\ds \inf_{\mathscr{M}_a} \Psi=+\infty$).
	Then, for each $\ds r\in ]\inf_{X}\Psi,\inf_{\mathscr{M}_a} \Psi[$ the problem of minimizing  $\Phi$ on $\Psi^{-1}(r)$ is well posed.
\end{customthm}

Our multiplicity result reads as follows:
\begin{theor}\label{application}
	Let $\Omega=\R^N\setminus B(0,R)$ for some $R>0$,  $2\leq p<q<p^*$,  $M:[0,+\infty[\to [0,+\infty[$ a continuous function such that if $\widehat M$ denotes its primitive, the following conditions hold:
	\begin{itemize}
			\item[$i)$] $\ds\widehat  M(t+s)\geq \widehat  M(t)+\widehat  M(s)$, for every $t,s\in [0,+\infty[$;
			
			\item[$ii)$] $\ds\inf_{t> 0} \ \displaystyle\frac{\widehat  M(t)}{t^{p^*\over p}}> c_p$;
		\item[$iv)$] $\ds\lim_{t\to 0} \ \displaystyle\frac{\widehat  M(t)}{t^{q\over p}}=0$,
		\end{itemize}
		where $c_p$ is from \eqref{theconstantcp}.
		
		Then, there  exists   $\lambda^*\in]0,1[$ such that the problem $(\mathcal P_{\lambda^*})$ has two nontrivial solutions.
	\end{theor}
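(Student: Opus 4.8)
The plan is to reduce the problem to a radially symmetric setting, where the subcritical term becomes weakly continuous, and then to feed the resulting functionals into Theorem A. Since $\Omega=\R^N\setminus B(0,R)$ is invariant under the action of $O(N)$, I would work on the closed subspace $X=W^{1,p}_{0,\mathrm{rad}}(\Omega)$ of radial functions. On $X$ one has the Strauss-type compact embedding $X\hookrightarrow\hookrightarrow L^s(\Omega)$ for every $s\in(p,p^*)$, while the embedding into $L^{p^*}(\Omega)$ is still only continuous. By the principle of symmetric criticality it suffices to find critical points of the energy $\mathcal E_\lambda(u)=\frac1p\widehat M(\|u\|^p)-\frac{\lambda}{p^*}\int_\Omega(u^+)^{p^*}\d-\frac{\lambda}{q}\int_\Omega(u^+)^{q}\d$ on $X$; these are automatically critical on the whole space, and testing the equation with $u^-$ (together with $\widehat M(t)>0$ for $t>0$ from $ii)$) forces them to be nonnegative, hence genuine solutions of $(\mathcal P_\lambda)$.

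To apply Theorem A I would set $\Psi(u)=\frac1p\widehat M(\|u\|^p)$ and $\Phi(u)=-\frac1{p^*}\int_\Omega(u^+)^{p^*}\d-\frac1q\int_\Omega(u^+)^q\d$, so that $\mu\Psi+\Phi=\mu\,\mathcal E_{1/\mu}$ and a global minimizer of $\mu\Psi+\Phi$ solves $(\mathcal P_{1/\mu})$ with $\lambda=1/\mu\in(0,1)$ as soon as $\mu>1$ (I take $a=1$). First I would check that for every $\mu>1$ the functional $\mu\Psi+\Phi$ has sequentially weakly compact sub-level sets: coercivity follows because $ii)$ together with $p\ge2$ gives $\widehat M(\|u\|^p)\ge\kappa\|u\|^{p^*}$ with $\kappa>c_p\ge\frac{p}{p^*}S_N^{-p^*/p}$, so the critical term is strictly dominated and the lower order subcritical term cannot destroy coercivity; weak lower semicontinuity follows from Remark \ref{with constant} applied to $\frac\mu p\widehat M(\|u\|^p)-\frac1{p^*}\int(u^+)^{p^*}\d$ (valid since $\mu\ge1$) plus the weak continuity of the subcritical term on $X$. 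The decisive use of $iv)$ comes in verifying the strict inequality $\inf_X\Psi<\inf_{\mathscr M_1}\Psi$: evaluating $\mathcal E_1$ along $tu_0$ with $u_0^+\neq0$ and using $\widehat M(t^p\|u_0\|^p)=o(t^q)$ shows $\inf_X\mathcal E_1<0$, so every element of $\mathscr M_1$ is nontrivial and has $\Psi>0=\Psi(0)=\inf_X\Psi$.

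The hard point, and the one I would spend most effort on, is the remaining hypothesis of Theorem A: that for each $\mu>1$ the functional $\mu\Psi+\Phi$ has a \emph{unique} global minimum. This is delicate because the functional is not convex (the critical and subcritical terms are concave on the positive cone) and Kirchhoff problems notoriously admit several positive solutions. I would try to establish uniqueness on the nonnegative cone by a fibering/monotonicity analysis of $t\mapsto(\mu\Psi+\Phi)(tv)$ combined with the strict superadditivity encoded in $i)$ and the strict bound in $ii)$, isolating the minimizer as the unique nontrivial zero of an associated scalar equation; if a direct argument is unavailable, a perturbation rendering the minimizer generically unique would be the fallback. Granting this, Theorem A yields, for each $r\in(0,\inf_{\mathscr M_1}\Psi)$, a well-posed minimization of $\Phi$ on the sphere $\Psi^{-1}(r)$ and a unique minimizer $u_r$, which by the Lagrange multiplier rule solves $(\mathcal P_{\lambda_r})$ for some $\lambda_r>0$.

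Finally, to exhibit two solutions at a common parameter I would combine the constrained minimizer with the free one. For a fixed $\lambda^*\in(0,1)$ the functional $\mathcal E_{\lambda^*}$ is coercive and sequentially weakly lower semicontinuous on $X$ (by Theorem \ref{semicontinuity1} and Remark \ref{with constant}), so it attains its global minimum at some $w$, which is nontrivial because $\inf_X\mathcal E_{\lambda^*}<0$ by the computation above, giving a first solution of $(\mathcal P_{\lambda^*})$. The second solution would be the Ricceri minimizer $u_r$ corresponding to the level $r$ for which $\lambda_r$ equals $\lambda^*$; such an $r$ is produced by a continuity argument letting $r\uparrow\inf_{\mathscr M_1}\Psi$, where $u_r$ approaches the $\mathscr M_1$-minimizer and $\lambda_r\to1$, so that intermediate values $\lambda^*\in(0,1)$ are attained. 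Distinctness of $w$ and $u_r$ would be read off from their different variational characterizations (a free minimizer of $\mathcal E_{\lambda^*}$ versus a constrained minimizer on a fixed sphere) by comparing their energies. I expect the two genuinely technical obstacles to be the uniqueness hypothesis above and the continuity of $r\mapsto\lambda_r$ needed to pin down $\lambda^*$.
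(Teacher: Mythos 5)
Your setup coincides with the paper's: the radial space $X=W^{1,p}_{0,\mathrm{rad}}(\Omega)$, the splitting $\Psi(u)=\frac1p\widehat M(\|u\|^p)$, $\Phi(u)=-\frac1{p^*}\|u^+\|_{p^*}^{p^*}-\frac1q\|u^+\|_q^q$, Theorem~A with $a=1$, coercivity from $ii)$, sequential weak lower semicontinuity via Remark~\ref{with constant}, the use of $iv)$ to get $\inf_X\Psi=0<\inf_{\mathscr{M}_1}\Psi$, and symmetric criticality at the end. But you have inverted the logic by which Theorem~A produces two solutions, and this is a fatal gap, not a technical one. The paper applies Theorem~A \emph{in contrapositive form}: it proves that for every $r\in\,]0,\inf_{\mathscr{M}_1}\Psi[$ the functional $\Phi$ has \emph{no} global minimum on $\Psi^{-1}(r)$. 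Indeed, if $u_0$ were such a minimum, the Lagrange multiplier rule would give $\Phi'(u_0)=\sigma_0\Psi'(u_0)$ with $\sigma_0<0$ and $u_0\ge 0$, and the Poho\v{z}aev identity on the exterior domain $\Omega=\R^N\setminus B(0,R)$ forces the boundary term $\frac{p-1}{p}\int_{\partial\Omega}|\nabla u_0|^p\,\sigma\cdot\nu\,\mathrm{d}\sigma$, which is nonpositive because the outward normal on $\partial\Omega$ points toward the origin, to equal the strictly positive quantity $\frac{N}{\sigma_0}\left(\frac1{p^*}-\frac1q\right)\int_\Omega u_0^q\,\mathrm{d}x$ --- a contradiction. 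Hence the constrained problem is \emph{not} well-posed; since the compactness of sub-levels and the inf-gap hypotheses of Theorem~A do hold, the only hypothesis that can fail is uniqueness of the global minimum, so there exists $\mu^*>1$ for which $\mu^*\Psi+\Phi$ has two distinct global minima, and these are the two solutions of $(\mathcal P_{\lambda^*})$ with $\lambda^*=1/\mu^*$.

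Consequently, the two steps you yourself flag as the hard points are not merely hard: they are impossible. First, uniqueness of the global minimum of $\mu\Psi+\Phi$ for every $\mu>1$ cannot be established by any fibering, monotonicity, or perturbation argument, because it is provably false under hypotheses $i)$, $ii)$, $iv)$: if it held, Theorem~A would deliver a global minimizer of $\Phi$ on $\Psi^{-1}(r)$, contradicting the Poho\v{z}aev argument above. Second, your intended second solution --- the constrained minimizer $u_r$ --- does not exist, so the continuity argument for $r\mapsto\lambda_r$ has no object to apply to. What survives of your proposal is the free global minimizer of $\mathcal F_{\mu}$ (nontrivial by $iv)$), which is one of the paper's two minima; the second solution can only come from the non-uniqueness forced by contraposition, not from a constrained minimization. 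In short, the multiplicity in this theorem is generated precisely by the \emph{failure} of the hypothesis you set out to verify.
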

	\begin{proof} Denote by $X=W^{1,p}_{0, {\rm rad}}(\Omega)$, the subspace of $W^{1,p}_{0}(\Omega)$ consisting of radial functions. It is well known that such space is  embedded into $L^{r}(\Omega)$ continuously for $r\in [p,p^*]$, and compactly  for $r\in ]p,p^*[$ (\cite{L1}).
		We apply Theorem \hyperref[RicceriJogo]{\textbf{A}}, to the space $X$ endowed with the weak topology, choosing $a=1$ and  functions \[\Psi(u)=\frac{1}{p}\widehat M(\|u\|^p)  \quad \hbox{and} \quad \Phi(u)=-\frac{1}{p^*}\|u^+\|_{p^*}^{p^*}-\frac{1}{q}\|u^+\|_{q}^{q}.\]
		For $\mu\geq 1$, define in $X$ the functional
		\[\mathcal F_\mu(u)=\mu \Psi(u)+\Phi(u)=\frac{\mu}{p}\widehat  M(\|u\|^p)-\frac{1}{p^*}\|u^+\|^{p^*}_{p^*}-
			\frac{1}{q}\|u^+\|^{q}_{q}, \quad \hbox{for every} \ u\in X.\] Thus,  if $\mathcal E_\mu(u)=\frac{{\mu}}{p}\widehat M(\|u\|^p)-\frac{1}{p^*}\|u\|^{p^*}_{p^*}$, then
			\begin{equation}
			\label{FandE}
			\mathcal{F}_\mu(u)=\mathcal{E}_\mu(u)-\frac{1}{p^*}\|u^-\|_{p^*}^{p^*}-\frac{1}{q}\|u^+\|_q^q.
			\end{equation}
			
			 From  $ii)$,  $\mathcal F_\mu$ is coercive.
			The sequential weak lower semicontinuity of  $\mathcal F_\mu$  follows from Theorem \ref{semicontinuity1}.
				Indeed, if $\{u_n\}\subset X$ is  weakly convergent  to $u\in X$, 			
			then $u_n^-\rightharpoonup	 u^-$ in $X$, and in particular in $L^{p^*}(\Omega)$, and  $u_n^+\to u^+$ (strongly)  in $L^{q}(\Omega)$, therefore from Remark \ref{with constant} and from the lower semicontinuity of the norm we have that
		\begin{eqnarray}\label{3lsc}
			&&\mathcal E_\mu(u)\leq \liminf_n \mathcal E_\mu(u_n),\nonumber\\
			&&\|u^-\|^{p^*}_{p^*}\leq  \liminf_n \|u_n^-\|^{p^*}_{p^*},\\
			&&\|u^+\|^{q}_{q}= \lim_n\|u_n^+\|^{q}_{q}.\nonumber
		\end{eqnarray}
	Therefore, on account of \eqref{FandE} and \eqref{3lsc} we have that  $$\liminf_{n\to \infty}(\mathcal F_{\mu}(u_n)- \mathcal F_{\mu}(u))\geq 0,$$ hence, $\mu \Psi+{\Phi}$ has sequentially weakly compact sub-level sets.
			
				Fix now a positive function $\bar u\in X$ and $\ds \varepsilon\in \left]0,\frac{p\|\bar u\|_q^q}{q\|\bar u\|^q}\right[ $. From assumption $iv)$ it follows that   there exists  $\delta>0$ such that $\widehat M(t)<\varepsilon t^{\frac{q}{p}}$ for $0<t\leq \delta$. If  $\ds\rho\in \left]0, \frac{\delta}{\|\bar u\|}\right[$ then,
				\begin{eqnarray*}
					\mathcal F_{1}(\rho\bar u)<\rho^q\left(\frac{1}{p}\varepsilon\| \bar u\|^q-
					\frac{1}{q}\|\bar u\|^{q}_{q}\right)-\frac{1}{p^*}\rho^{p^*}\|\bar u\|^{p^*}_{p^*}<0.
			\end{eqnarray*}

		Thus, if we denote by $\mathscr{M}_1$ the set of global minima of $\mathcal F_1=\Phi+\Psi$ (which is non empty), $0\notin \mathscr{M}_1$.
		
		We claim that $\ds \inf_{\mathscr{M}_1}\Psi>0$. Indeed, if $\ds \inf_{\mathscr{M}_1}\Psi=0$, there would exist a sequence $\{u_n\}$ in  $\mathscr{M}_1$ such that $\Psi(u_n)\to 0$, that is $u_n\to 0$, but then, by continuity of $\Psi+\Phi$,
		\[(\Psi+\Phi)(0)=\min_X (\Psi+\Phi),\] which is in contradiction with $0\notin \mathscr{M}_1$. Thus,
		\[\inf_X \Psi=0<\inf_{\mathscr{M}_1}\Psi.\] Let $\ds r\in]0,\inf_{\mathscr{M}_1}\Psi[$. Assume by contradiction that $\Phi$ has a global  minimum  $u_0$ on $$\Psi^{-1}(r)=\{u\in X: \|u\|^p=(\widehat M)^{-1}(pr)\}.$$ Thus, by the Lagrange multiplier rule, there exists $\sigma_0\leq 0$ such that $$\Phi'(u_0)=\sigma_0\Psi'(u_0),$$
		or
		\[\sigma_0\int_{\Omega}|\nabla u_0|^{p-2}\nabla u_0\nabla v\d =-
		\int_{\Omega}({u_0}_+^{p^*-1}+{u_0}_+^{q-1})v\d, \quad \mbox{for every} \ v\in X
		\]
		Thus,  $\sigma_0<0$, $u_0\geq0$ and plugging $v=u_0$ in the above equality we deduce that
		\[\int_{\Omega}|\nabla u_0|^{p}\d =-\frac{1}{\sigma_0}
		\int_{\Omega}({u_0}^{p^*}+{u_0}^{q})\d.
		\]
		On the other hand, by the Poho\v{z}aev equality,  one has
		\begin{eqnarray*}
			\frac{p-1}{p}\int_{\partial\Omega} |\nabla u_0|^p \sigma\cdot \nu \mathrm{d}\sigma&=&-\frac{N}{\sigma_0}\int_{\Omega }\left(\frac{u_0^{p^*}}{p^*}+\frac{u_0^{q}}{q}\right)\d-\frac{N-p}{p}\int_{\Omega}|\nabla u_0|^p\d=\\
			&=&
			\frac{N}{\sigma_0}\left(-\frac{1}{q}+\frac{1}{p^*}\right)\int_{\Omega}u_0^q \d>0,
		\end{eqnarray*}
		against the fact that  the left hand side is non positive (due to the shape of the domain $\Omega$). Thus, the problem of minimizing $\Phi$ on $\Psi^{-1}(r)$ is not well posed and by Theorem \hyperref[RicceriJogo]{\textbf{A}}, we conclude that there exists $\mu^*>1$ such that $\mu^*\Psi+\Phi$ has  two distinct global minima in $X$. By the Palais principle of symmetric criticality \cite{Pa} these minima are critical points of $\mathcal F_{\mu^*}$, i.e. solutions of $(\mathcal P_{\lambda^*})$ with $\lambda^*=\frac{1}{\mu^*}$.
		\end{proof}
\begin{rem}
{\rm In order to prove  that  the problem of minimizing $\Phi$ on $\Psi^{-1}(r)$ is not well posed, one could  prove either that  $\Phi$ has  two distinct global minima on $\Psi^{-1}(r)$ or that $\Phi$ has no global minima on the level set. This is the first application of Theorem \hyperref[RicceriJogo]{\textbf{A}} where such conclusion is obtained by showing that through the Pohozaev inequality  $\Phi$ has no global minima on $\Psi^{-1}(r)$. For contributions where the claim is achieved via the existence of two global minima for the constrained problem we mention for instance \cite{R2,R3}}.\end{rem}
\begin{rem}
{\rm Theorem \ref{application} applies for instance to the simple case $N=4$, $\Omega =\R^4\setminus B(0,R)$,  $M(t)=bt$ where $b>2c_2$ and $p=2<q<4$.}
\end{rem}
	

\section{Concluding remarks}
In the present paper we presented some energy properties of the energy functional associated to a critical Kirchhoff problem with an application  to a nonlocal problem on an exterior domain. We believe that such properties can be used in different settings to establish, by the means of critical point theory, existence and multiplicity results for perturbations of $(\mathcal P)$. We formulate some open problems which could be object of   forthcoming investigations.
\begin{enumerate}
	\item Theorem \ref{semicontinuity1} allows us to consider critical Kirchhoff equation on unbounded domain while   the boundedness of $\Omega$ is needed in the proof of Theorem \ref{semicontinuity2}. We conjecture that the sequential weak lower semicontinuity property holds true for general domains even when $1<p<2$.
	\item In Theorem \ref{application} we proved the existence of  $\mu^*>1$, such that the functional $\mathcal F_{\mu^*}$ (see \eqref{FandE}) has two distinct global minima in $X=W^{1,p}_{0, {\rm rad}}(\Omega)$. Can we say that  such points are global minima of $\mathcal F_{\mu^*}$ in $W^{1,p}_{0}(\Omega)$?
	\item In Theorem \ref{application}, under the additional  assumption $iii)$ which ensures the  Palais--Smale condition for $\mathcal F_{\mu^*}$, the energy functional admits a third critical point in $X$ as it follows by \cite{PucciSerrin}. Can we exclude, under additional  assumptions on $M$, that such third solution for $(\mathcal P_{\lambda^*})$ is trivial?
\end{enumerate}
\section*{Acknowledgment}
\noindent The authors would like to thank Prof. Ricceri for illuminating discussion. \\
The work of F. Faraci has been supported by the Universit\`{a} degli Studi di Catania, "Piano della Ricerca 2016/2018 Linea di intervento 2". F. Faraci is member of the Gruppo Nazionale per l'Analisi Matematica, la Probabilit\`{a}
 e le loro Applicazioni (GNAMPA) of the Istituto Nazionale di Alta Matematica (INdAM). C. Farkas has been supported by the National Research, Development and Innovation Fund of Hungary, financed under the K\_18 funding scheme, Project No. 127926 and the Sapientia Foundation – Institute for Scientific Research, Project No. 17/11.06.2019.

\end{document}